\documentclass[a4paper]{article}

\usepackage{amssymb,amsmath,amsthm,amscd}
\usepackage[english]{babel}

\usepackage{times}
\usepackage[T1]{fontenc}

\usepackage{hyperref}

\newtheorem{proposition}{Proposition}
\newtheorem{theorem}{Theorem}
\newtheorem{corollary}{Corollary}
\newtheorem{remark}{Remark}

\newtheorem{example}{Example}


\newcommand{\HH}{{\mathbb{H}}}

\newcommand{\rr}{{\mathbb{R}}}

\newcommand{\s}{{\mathbb{S}}}

\newcommand{\I}{\mathcal{I}}
\newcommand{\dibar}{\overline\partial}

\newcommand\vs[1]{{#1}_s^\circ}
\newcommand\sd[1]{{#1}'_s}

\newcommand\re{\operatorname{Re}}
\newcommand\im{\operatorname{Im}}

\newcommand{\ui}{\imath}

\newcommand{\OO}{\Omega}

\newcommand{\mbb}{\mathbb}

\newcommand{\R}{\mbb{R}}
\newcommand{\mc}{\mathcal}

 \newcommand{\C}{\mbb{C}}
 
 \newcommand{\Q}{\mc{Q}}
 \def\SS{\mathbb S}
 \newcommand{\bc}{\begin{center}}
 \newcommand{\ec}{\end{center}}

\newcommand{\Zt}{{\mathcal P}}
\newcommand{\Rn}{\mathbb R_n}
\newcommand\Ac{\Rn\otimes_{\R}\C}
\def\Q{\mc{Q}}
\newcommand\IM{\operatorname{Im}}


\title{Almansi-type theorems for slice-regular functions on Clifford algebras}

\author{Alessandro Perotti\\
\small Department of Mathematics, University of Trento\\ 
\small Via Sommarive 14, I-38123 Povo Trento, Italy\\
\small alessandro.perotti@unitn.it}

\date{  }


\begin{document}

\maketitle


\begin{abstract}
We present an Almansi-type decomposition for polynomials with Clifford coefficients, and more generally for slice-regular functions on Clifford algebras. The classical result by Emilio Almansi, published in 1899, dealt with poly\-harmonic functions, the elements of the kernel of an iterated Laplacian. Here we consider polynomials of the form $P(x)=\sum_{k=0}^d x^ka_k$, with Clifford coefficients $a_k\in\mathbb R_{n}$, and get an analogous decomposition related to zonal polyharmonics. We show the relation between such decomposition and the Dirac (or Cauchy-Riemann) operator and extend the results to slice-regular functions.
\medskip

\noindent{\bfseries Mathematics Subject Classification (2010)}. Primary 30G35; Secondary 31B30, 33C50, 33C55.

\noindent{\bfseries Keywords:} Clifford polynomials, Slice-regular functions, Zonal harmonics, Polyharmonic functions.

\end{abstract}

\section{Introduction and preliminaries}

A classical result of Emilio Almansi, published in 1899 (see \cite{Almansi,Aronszajn}), gave an expansion of polyharmonic functions in terms of harmonic functions. We recall that for every integer $p\ge1$, a real function $u$ defined on an open set $\OO$ in $\R^N$ is called \emph{polyharmonic} of order $p$ (or \emph{$p$-harmonic}) if $u$ is of class $\mathcal C^{2p}$ and $\Delta^p u=0$ on $\OO$, where $\Delta$ is the Laplacian operator of $\R^N$.

\begin{theorem}[Almansi]\label{th:ClassicalAlmansi}
If $\Delta^p u=0$ on a star-like domain $D\subseteq\rr^N$ with centre 0, then there exist unique harmonic functions $u_0,\ldots,u_{p-1}$ in $D$ such that
\[u(x)=u_0(x)+|x|^2u_1(x)+\cdots+|x|^{2p-2}u_{p-1}(x)\quad\text{for }x\in D.
\]
\end{theorem}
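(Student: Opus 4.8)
The plan is to reduce everything to a single computation of how the Laplacian interacts with multiplication by $|x|^2$, and then to argue by induction on $p$. Writing $E=\sum_{i=1}^N x_i\partial_{x_i}$ for the Euler (radial) operator, a direct differentiation gives the identity $\Delta(|x|^2 v)=|x|^2\Delta v+4Ev+2Nv$ for every smooth $v$. Specializing to a homogeneous harmonic polynomial $h$ of degree $m$ (so $\Delta h=0$ and $Eh=mh$) and iterating yields $\Delta\bigl(|x|^{2k}h\bigr)=2k(2k+2m+N-2)\,|x|^{2k-2}h$, and hence, after $j$ applications, $\Delta^{j}\bigl(|x|^{2j}h\bigr)=c_{j,m}\,h$ with $c_{j,m}=\prod_{k=1}^{j}2k(2k+2m+N-2)$. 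The crucial features are that $c_{j,m}>0$ for all $m\ge0$ (each of its factors is positive) and that $\Delta^{j+1}\bigl(|x|^{2j}h\bigr)=0$; these two facts drive both halves of the proof.

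For \textbf{uniqueness}, suppose $\sum_{k=0}^{p-1}|x|^{2k}u_k\equiv0$ with each $u_k$ harmonic. Applying $\Delta^{p-1}$ annihilates every term with $k\le p-2$ (decompose $u_k$ into homogeneous harmonic components and use $\Delta^{k+1}(|x|^{2k}h)=0$), leaving $\Delta^{p-1}(|x|^{2(p-1)}u_{p-1})=0$. Expanding $u_{p-1}=\sum_m u_{p-1,m}$ into homogeneous harmonic parts and using $\Delta^{p-1}(|x|^{2(p-1)}u_{p-1,m})=c_{p-1,m}u_{p-1,m}$ with $c_{p-1,m}\neq0$ forces $u_{p-1}=0$; the remaining identity $\sum_{k=0}^{p-2}|x|^{2k}u_k\equiv0$ is then handled by downward induction on $p$.

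For \textbf{existence}, I would induct on $p$, the case $p=1$ being trivial. Given $\Delta^p u=0$, the function $w:=\Delta^{p-1}u$ is harmonic, and the goal is to find a harmonic $u_{p-1}$ with $\Delta^{p-1}\bigl(|x|^{2(p-1)}u_{p-1}\bigr)=w$. Once this is done, $u-|x|^{2(p-1)}u_{p-1}$ is $(p-1)$-harmonic and the inductive hypothesis produces the lower-order terms; the harmonicity of each $u_k$ and the positivity $c_{j,m}>0$ guarantee the construction never divides by zero. Morally, $u_{p-1}$ is obtained by expanding $w=\sum_m w_m$ into homogeneous harmonics and setting $u_{p-1}=\sum_m c_{p-1,m}^{-1}w_m$.

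The \textbf{main obstacle} is precisely this last step on a general star-like domain rather than a ball: the homogeneous-harmonic expansion of $w$ converges only on balls centred at $0$, so one cannot naively sum the series $\sum_m c_{p-1,m}^{-1}w_m$ on all of $D$. I would circumvent this by constructing $u_{p-1}$ through the radial integral operators $(T_\alpha f)(x)=\int_0^1 f(tx)\,t^{\alpha-1}\,dt$, which are globally defined on the star-like domain $D$ (since $tx\in D$ for $t\in[0,1]$) and satisfy $(E+\alpha)T_\alpha f=f$. Because $c_{p-1,m}$ is the value at $E=m$ of the polynomial $\prod_{k=1}^{p-1}2k(2k+2E+N-2)$ in $E$, a suitable composition of the $T_\alpha$'s inverts $\Delta^{p-1}\circ(|x|^{2(p-1)}\,\cdot\,)$ directly on harmonic functions, with all relevant $\alpha$ bounded away from the nonnegative integers so that no operator is singular. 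Verifying that this composition indeed lands in the harmonic functions and realizes $c_{p-1,m}^{-1}$ degree by degree is the technical heart of the argument.
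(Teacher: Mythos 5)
The paper does not prove this statement: it is quoted as the classical theorem of Almansi, with the proof delegated to the references (Almansi 1899 and the monograph of Aronszajn, Creese and Lipkin), so there is no in-paper argument to compare against. Your proposal is essentially the standard proof of that classical result, and it is correct in outline: the commutation identity $\Delta(|x|^{2k}h)=2k(2k+2m+N-2)|x|^{2k-2}h$, the constants $c_{j,m}$, and the inversion of $\Delta^{p-1}\circ(|x|^{2(p-1)}\,\cdot\,)$ by the radial operators $T_\alpha$ (which satisfy $(E+\alpha)T_\alpha f=f$ and act as $(m+\alpha)^{-1}$ on homogeneous pieces of degree $m$) are all right, and the operators $T_\alpha$ are indeed exactly what makes the argument work on a star-like domain rather than only on a ball.

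Two points need tightening. First, in the uniqueness step you decompose the harmonic $u_k$ into homogeneous harmonic components, but on a general star-like $D$ this expansion converges only on balls centred at $0$; you should either carry out the argument on such a ball and invoke real-analyticity of harmonic functions (unique continuation on the connected set $D$) to conclude $u_k\equiv0$ on all of $D$, or avoid the expansion altogether by proving the operator identity $\Delta^{j}(|x|^{2j}v)=4^{j}j!\prod_{k=1}^{j}(E+k+\tfrac{N}{2}-1)v$ for harmonic $v$ directly (note $Ev$ is harmonic when $v$ is, since $\Delta E=(E+2)\Delta$), which holds pointwise on all of $D$ and makes both the uniqueness and the verification that your composition of $T_\alpha$'s is a genuine inverse immediate, without any degree-by-degree bookkeeping. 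Second, the condition you need on the parameters is not that the $\alpha$'s be ``bounded away from the nonnegative integers'' but simply that each $\alpha=k+\tfrac{N}{2}-1$ be positive, so that the boundary term at $t=0$ in the integration by parts vanishes and $m+\alpha\neq0$ for every $m\ge0$; this clearly holds. With these repairs the argument is complete.
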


Let $\Rn$ denote  the real Clifford algebra of signature $(0,n)$, with basis vectors $e_1,\ldots, e_n$.
Consider Clifford polynomials of the form
\[p(x)=\sum_{k=0}^dx^ka_k\]
with Clifford coefficients $a_k\in\Rn$ on the right. 
If $n=2m+1$ is odd, these polynomials are polyharmonic of order $m+1$ with respect to the standard Laplacian of $\rr^{n+1}$, i.e.\ $(\Delta_{n+1})^{m+1}p=0$ on $\rr^{n+1}$ (see e.g.\ \cite[Corollary~4.4.4]{Harmonicity} and the next section). Since the Laplacian is a real operator, this property is equivalent to say that the real components of a Clifford polynomial are $(m+1)$-harmonic. 
The classical Almansi's Theorem \ref{th:ClassicalAlmansi} implies then that the real components of a Clifford polynomial have an expansion in terms of a $(m+1)$-tuple of harmonic functions. 

Observe that every Clifford polynomial $p$ is more than $(m+1)$-polyharmonic, since it is in the kernel of the $n^{th}$-order differential operator $\dibar(\Delta_{n+1})^{m}$, where $\dibar$ is the Cauchy-Riemann operator
\[\dibar=\dd{}{x_0}+e_1\dd{}{x_1}+\cdots+e_n\dd{}{x_n}\]
 on $\Rn$ (Theorem~\ref{th:harmonicity}). We recall also the definitions of the Dirac operator
\[\mathcal D=e_1\dd{}{x_1}+\cdots+e_n\dd{}{x_n}\]
and the conjugated Cauchy-Riemann operator 
\[\partial=\dd{}{x_0}-e_1\dd{}{x_1}-\cdots-e_n\dd{}{x_n}\]
on $\Rn$. Since
\[\partial \dibar=\dibar\partial=\Delta_{n+1}=\frac{\partial^2}{\partial x_0^2}+\frac{\partial^2}{\partial x_1^2}+\cdots+\frac{\partial^2}{\partial x_n^2},\]
the operators $\partial,\dibar$ factorize the Laplacian operator of the paravector subspace \[V=\{x_0+x_1e_1+\cdots +x_ne_n\in\Rn\ |\ x_0,\ldots,x_n\in\R\}\simeq\mathbb R^{n+1}\] 
of $\Rn$. This property shows how Clifford analysis, the well-developed function theory based on Dirac and Cauchy-Riemann operators (see \cite{BDS,DSS,GHS} and the vast bibliography therein), is related to the harmonic analysis of the Euclidean space. 

The title of the present paper resembles the one of \cite{MalonekRen}, where the authors proved Almansi-type theorems in Clifford analysis. Note that the two classes of functions, the one of slice-regular functions on $\Rn$ and the one of monogenic functions in the kernel of $\dibar$, are really skew, in the sense that only constant functions are both monogenic and slice-regular (see \cite[Corollary~3.3.3]{Harmonicity}). In particular, standard nonconstant Clifford polynomials of the form $\sum_{k=0}^dx^ka_k$ are not monogenic. However, as also the results obtained in this paper show, the relations between the two function theories deserve interest.

In this paper we will prove, using the symmetry properties of Clifford powers $x^k$, a refined  Almansi-type decomposition for Clifford polynomials and we will extend it to the larger class of slice-regular functions on $\Rn$ and to the case of even dimension $n$.

\subsection{Slice-regular functions on Clifford algebras}

Polynomial functions $f(x)=\sum_k x^ka_k$ with Clifford coefficients are examples of 
\emph{slice-regular} functions. This class of functions constitutes a recent function theory in several hypercomplex settings, including quaternions and real Clifford algebras (see \cite{GeSt2007Adv, CoSaSt2009Israel,GhPe_AIM,libroverde,GeStoSt2013,AlgebraSliceFunctions,DivisionAlgebras}). It was introduced by Gentili and Struppa \cite{GeSt2007Adv} for functions of one quaternionic variable and by Colombo, Sabadini and Struppa \cite{CoSaSt2009Israel} for functions defined on open subsets of $\R^{n+1}$, considered as the paravector space of $\Rn$. 

We briefly recall the basic definitions we need for the main results and refer the reader to the cited references for more details.
Let $\HH$ denote the skew field of quaternions, with basic elements $i,j,k$. For each quaternion $J$ in the sphere of imaginary units
 \[\SS_\HH=\{J\in\HH\ |\ J^2=-1\}=\{x_1i+x_2j+x_3k\in\HH\ |\ x_1^2+x_2^2+x_3^2=1\},\]
let $\C_J=\langle 1,J\rangle\simeq\C$ be the subalgebra generated by $J$. Then we have the ``slice'' decomposition
\[\HH=\bigcup_{J\in \SS_\HH}\C_J, \quad\text{with $\C_J\cap\C_K=\R$\quad for every $J,K\in\SS_\HH,\ J\ne\pm K$.}\]
A differentiable function $f:\OO\subseteq\HH\rightarrow\HH$  is called \emph{(left) slice-regular} \cite{GeSt2007Adv} on the open set $\OO$ if, for each $J\in\SS_\HH$, the restriction
$f_{\,|\OO\cap\C_J}\, : \, \OO\cap\C_J\rightarrow \HH$
is holomorphic with respect to the complex structure defined by left multiplication by $J$. 

If $\OO\subseteq\R^{n+1}$ is an open subset of the paravector space in $\Rn$, a function  $f:\OO\subseteq\R^{n+1}\rightarrow\Rn$ such that, for each $J$ in the sphere $\SS^{n-1}$ of paravector imaginary units, the restriction 
$f_{\,|\OO\cap\C_J}\, : \, \OO\cap\C_J\rightarrow \Rn$
is holomorphic w.r.t.\ the complex structure $L_J(v)=Jv$, is called \emph{slice monogenic} \cite{CoSaSt2009Israel}.
For example, polynomials $f(x)=\sum_k x^ka_k$ with Clifford coefficients on the right are slice monogenic on $\R^{n+1}$. More generally, convergent power series are slice monogenic on an open ball in $\R^{n+1}$ centred at the origin.
Observe that nonconstant polynomials (or power series) do not belong to the kernel of the Cauchy-Riemann operator $\dibar$ of Clifford analysis.

A different approach to slice regularity has been introduced in \cite{GhPe_AIM,AlgebraSliceFunctions} and can be generalized to an ample class of real $^*$- algebras. Here we restrict to the case of the Clifford algebras $\Rn$, in view of their direct relations with Euclidean harmonic analysis and Clifford analysis.
Consider on $\Rn$ the $^*$-algebra structure given by the Clifford conjugation $x\mapsto x^c$, the unique linear antiinvolution of $\Rn$ such that \[e_i^c=-e_i\quad\text{for }i=1,\ldots,n.\]
The real multiples of the unity of $\Rn$ are identified with the real numbers. 
 If $x=x_0+x_1e_1+\cdots +x_ne_n\in\R^{n+1}$, then $x^c=x_0-x_1e_1-\cdots -x_ne_n$ and therefore the \emph{trace}
 $t(x)=x+x^c=2x_0$ is real and the \emph{norm} $n(x)=xx^c=|x|^2$ is real nonnegative. These relations hold on the entire \emph{quadratic cone} of $\Rn$, the subset $\Q_{\Rn}$ of $\Rn$ defined by 
\[\Q_{\Rn}=\{x\in\Rn\;|\; t(x)\in\R,n(x)\in\R\}.\]
Note that $\Q_{\R_1}=\R_{1}\simeq\C$, 
$\Q_{\R_2}=\R_2\simeq\HH$, while $\Q_{\Rn}$ is a proper subset of $\Rn$ if $n\ge3$, properly containing the paravector space.
Each element $x\in\Q_{\Rn}$ can be uniquely written as
\[x=\re(x)+\im(x)\]
with $\re(x)=\frac{x+x^c}2$, $\im(x)=\frac{x-x^c}2=\beta J$, with $\beta=|\im(x)|$ and $J\in\SS_{\Rn}$, the ``sphere'' of all imaginary units $J$ in $\Rn$  (not necessarily paravectors) compatible with the $*$-algebra structure, that is satisfying the conditions $t(J)=0$, $n(J)=1$.
In other words, we still have the slice decomposition
  \[\Q_{\Rn}=\bigcup_{J\in \SS_{\Rn}}\C_J\]
where $\C_J=\langle 1,J\rangle$ is the complex ``slice'' of $\Rn$ generated by $J\in\SS_{\Rn}$. It holds $\C_J\cap\C_K=\R$ for each $J,K\in\SS_{\Rn}$, $J\ne\pm K$. The quadratic cone is a real cone invariant with respect to translations along the real axis.

Let $\Rn\otimes_{\R}\C$ be the complex Clifford algebra, with elements represented as $w=a+\ui b$, $a,b\in\Rn$, $\ui^2=-1$.
Every Clifford polynomial $f(x)=\sum_k x^ka_k$ lifts to a unique polynomial function $F:\C\rightarrow\Rn\otimes_{\R}\C$  which makes the following diagram commutative for all $J \in \SS_{\Rn}$:
\begin{equation}\label{comm_diagram}
\begin{CD}
\C\simeq \R\otimes_\R\C @>F> >\Rn\otimes_\R\C\\ 
@V \Phi_J V  V 
@V V \Phi_J V\\
\Q_{\Rn} @>f> >\Rn 
\end{CD} 
\end{equation}
where $\Phi_J: \Rn\otimes_{\R}\C \to \Rn$ is defined by $\Phi_J(a+\ui b):=a+Jb$. The lifted polynomial is simply $F(z)=\sum_k z^ka_k$, with variable $z=\alpha+\ui \beta\in\C$.
In this lifting, the usual product of polynomials with coefficients in $\Rn$ on one fixed side (the one obtained by imposing commutativity of the indeterminate with the coefficients when two polynomials are multiplied together) corresponds to the pointwise pro\-duct in the algebra $\Rn\otimes_{\R}\C$.

If a Clifford algebra-valued function $f$ (not necessarily a polynomial) has a holomorphic lifting $F$ as in \eqref{comm_diagram} then $f$ is called \emph{(left) slice-regular}.

More precisely, let $D\subseteq\C$ be a set that is invariant with respect to complex conjugation. 
In $\Ac$ consider the complex conjugation that maps $w=a+\ui b$ to $\overline w=a-\ui b$ (with $a,b\in \Rn$).
If a function $F: D \to \Ac$ satisfies  $F(\overline z)=\overline{F(z)}$ for every $z\in D$, then $F$  is called a \emph{stem function} on $D$. Let $\OO_D$ be the \emph{circular} subset of the quadratic cone defined by 
\[\OO_D=\bigcup_{J\in\SS_{\Rn}}\Phi_J(D).\]
 The stem function $F=F_1+\ui F_2:D \to \Ac$  induces the \emph{(left) slice function} $f=\I(F):\OO_D \to \Rn$ in the following way: if $x=\alpha+J\beta =\Phi_J(z)\in \OO_D\cap \C_J$, then  
\[ f(x)=F_1(z)+JF_2(z),\]
where $z=\alpha+\ui\beta$.
The slice function $f=\I(F)$ is called \emph{(left) slice-regular} if $F$ is holomorphic. 
The function $f=\I(F)$ is called \emph{slice-preserving} if $F_1$ and $F_2$ are real-valued (this is the case already considered by Fueter \cite{Fueter1934} for quaternionic functions and by  G\"urlebeck and Spr\"ossig \cite{GHS} for \emph{radially holomorphic} functions on Clifford algebras). In this case, the condition $f(x^c)=f(x)^c$ holds for each $x\in\OO_D$.

When $n=2$, $\R_2$ is the algebra of real quaternions. If the domain $D$ intersects the real axis, this definition of slice regularity is equivalent to the one proposed by Gentili and Struppa \cite{GeSt2007Adv}.
For any $n>2$, when the domain $\OO$ intersects the real axis, the definition of slice regularity on $\Rn$ is equivalent to the one of slice monogenicity, in the sense that the restriction to the paravector space of a $\Rn$-valued slice-regular function is a slice-monogenic function.

\subsection{Slice product and spherical derivatives}

Let $\OO=\OO_D$ be a circular open subset of the quadratic cone of $\Rn$.
The \emph{slice product} of two slice functions $f=\I(F)$, $g=\I(G)$ on $\OO$ is the slice function induced by the pointwise product of the stems $F$ and $G$, i.e.\  $f\cdot g=\I(FG)$. If $f$ is slice-preserving, then $f\cdot g$ coincides with the pointwise product of $f$ and $g$. In this case, we will denote it simply by $fg$. 
Note that if $f,g$ are slice-regular on $\OO$, then also $f\cdot g$ 
 is slice-regular on $\OO$.



The function $\vs f:\OO \to \Rn$, called \emph{spherical value} of $f$, and the function $f'_s:\OO \setminus \R \to \Rn$, called  \emph{spherical derivative} of $f$, are defined as
\[
\vs f(x):=\tfrac{1}{2}(f(x)+f(x^c))
\quad \text{and} \quad
f'_s(x):=\tfrac{1}{2} \im(x)^{-1}(f(x)-f(x^c)).
\] 
 They are slice functions, constant on every set of the form $\SS_x=\alpha+\SS_{\Rn}\beta$ for any $x=\alpha+J\beta\in\OO\setminus\R$. Moreover, they satisfy the relation
  \begin{equation}\label{eq:sp_representation}
  f(x)=\vs{f}(x)+\im(x)f'_s(x)
  \end{equation}
for each $x\in\OO\setminus \R$. 

\begin{remark}
It can be shown that
$\vs{f}(x)$ is the spherical mean of $f$ on $\SS_x$, while $f'_s(x)$ is the spherical mean of $\im(x)^{-1}f(x)$ on $\SS_x$.
\end{remark}

\subsection{Slice-regularity and harmonicity}

In \cite{Harmonicity} some formulas were proved, relating the Cauchy-Riemann operator, the spherical Dirac operator and the spherical derivative of a slice function. From this it was obtained a result which implies, in particular, the Fueter-Sce Theorem and gives a characterization of slice-regularity by means of the Cauchy-Riemann operator.  We recall that Fueter's Theorem \cite{Fueter1934}, generalized by Sce \cite{Sce}, Qian \cite{Qian1997} and Sommen \cite {Sommen2000} on Clifford algebras and octonions, in our language states that applying  to a slice-preserving slice-regular function the Laplacian operator of $\R^4$ (in the quaternionic case) or the iterated Laplacian operator $(\Delta_{n+1})^{m}$ of $\R^{n+1}$ (in the Clifford algebra case with $n=2m+1$ odd), one obtains a function in the kernel, respectively, of the Cauchy-Riemann-Fueter operator or of the Cauchy-Riemann operator.
This result was extended  in \cite{CSSFueter2010} to the whole class of slice-monogenic functions defined by means of stem functions.

Since  the paravector space $\R^{n+1}$ is contained in the quadratic cone $\Q_{\Rn}$, we can consider the restriction of a slice function on domains of the form $\OO=\OO_D\cap\R^{n+1}$ in $\R^{n+1}$. Thanks to the representation formula (see e.g.~\cite[Proposition~6]{GhPe_AIM}), this restriction uniquely determines the slice function. We will therefore use the same symbol to denote the restriction. 

\begin{theorem}\cite[Corollary~3.3.3 and Corollary~3.4.4]{Harmonicity}\label{th:harmonicity}
\\
Let $f:\OO\subseteq\R^{n+1}\to\Rn$ be a  slice function of class $\mathcal{C}^1(\OO)$. Then it holds:
\begin{itemize}
  \item[(a)]
$f$ is slice-regular if and only if $\,\dibar f=(1-n)f'_s$.
\item[(b)]
Let $n=2m+1$ be odd.
 If $f:\OO\subseteq\R^{n+1}\to\R_n$ is (the restriction of) a slice-regular function, then it holds:
\begin{itemize}
\item[(i)]
  $(\Delta_{n+1})^{m}f'_s=0$ on $\OO$, i.e.\ the spherical derivative of a slice-regular function is $m$-harmonic.
\item[(ii)]
 The following generalization of Fueter-Sce Theorem for $\Rn$ holds true:
\[ \dibar(\Delta_{n+1})^{m}f=0\text{\quad on $\OO$}.\]
\item[(iii)]
 $(\Delta_{n+1})^{m+1} f=0$, i.e.\ every slice regular function on $\Rn$ is polyharmonic of order $m+1$.
\end{itemize}
\end{itemize}
\end{theorem}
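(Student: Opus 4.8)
The plan is to reduce the whole theorem to one explicit computation of $\dibar$ on a slice function together with a single analytic lemma, after which (ii) and (iii) follow formally. Writing $x=\alpha+J\beta$ with $\alpha=x_0=\re(x)$, $\beta=|\im(x)|$, $J=\im(x)/\beta\in\SS_{\Rn}$, and $f(x)=F_1(z)+JF_2(z)$ with $z=\alpha+\ui\beta$, I would first differentiate directly, using $\partial_{x_i}\beta=x_i/\beta$, $\partial_{x_i}J=e_i/\beta-Jx_i/\beta^2$, the relation $\sum_{i}e_i^2=-n$, and $\im(x)J=-\beta$. A chain-rule computation then yields the master formula
\[
\dibar f=\big(\partial_\alpha F_1-\partial_\beta F_2\big)+J\big(\partial_\beta F_1+\partial_\alpha F_2\big)+(1-n)\tfrac{F_2}{\beta}.
\]
A one-line check from the definitions, using $\im(x)^{-1}=-J/\beta$, gives $f'_s=F_2/\beta$. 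Since holomorphy of $F$ is exactly the vanishing of the first two brackets (the Cauchy–Riemann equations for $F_1,F_2$), and since those brackets are $\Rn$-valued and independent of $J$, evaluating $\dibar f-(1-n)f'_s=A+JB$ at a unit $J$ and at $-J$ forces $A=B=0$; this proves statement (a) in both directions.

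For part (b) the only real content is (i), which I expect to be the main obstacle, as it is essentially the Fueter–Sce phenomenon. The goal is the lemma: if $F_2$ is harmonic in $(\alpha,\beta)$, then the zonal function $\beta^{-1}F_2$ is $m$-harmonic on $\R^{n+1}$ for $n=2m+1$. Restricting $\Delta_{n+1}$ to zonal functions gives the operator $L_{2m}=\partial_\alpha^2+\partial_\beta^2+\tfrac{2m}{\beta}\partial_\beta$. By linearity, and differentiating in $\lambda$ to recover all monomials $z^k$ hence all holomorphic $F$, I would reduce to the generating case $F(z)=e^{\lambda z}$, for which $f'_s=e^{\lambda\alpha}\phi_0(\beta)$ with $\phi_0=\sin(\lambda\beta)/\beta$. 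Here $L_{2m}$ acts as $\partial_\alpha^2+\Lambda_{2m+1}$ in the radial variable, where $\Lambda_N=\partial_\beta^2+\tfrac{N-1}{\beta}\partial_\beta$, and $\phi_0$ satisfies the three-dimensional radial Helmholtz equation $(\Lambda_3+\lambda^2)\phi_0=0$.

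A dimensional ladder then finishes the lemma. Setting $R=\beta^{-1}\partial_\beta$ and $\phi_k=R^k\phi_0$, the classical intertwining $R\Lambda_N=\Lambda_{N+2}R$ yields $(\Lambda_{2k+3}+\lambda^2)\phi_k=0$; subtracting this relation and using $\Lambda_{2m+1}-\Lambda_{2k+3}=(2m-2k-2)R$ gives $(\Lambda_{2m+1}+\lambda^2)\phi_k=2(m-1-k)\phi_{k+1}$. Iterating $m$ times produces the scalar factor $\prod_{k=0}^{m-1}2(m-1-k)$, which vanishes (the $k=m-1$ term is $0$); hence $(\Lambda_{2m+1}+\lambda^2)^m\phi_0=0$, that is $(\Delta_{n+1})^m f'_s=0$, proving (i). Passing back from the exponential case to a general slice-regular $f$ is the only delicate bookkeeping, and it is handled by applying the identity termwise to the locally uniformly convergent power series of $f$.

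Finally, (ii) and (iii) are formal. Since $\Delta_{n+1}$ is a real scalar operator it commutes with the constant-coefficient operator $\dibar$, so applying $(\Delta_{n+1})^m$ to (a) gives
\[
\dibar(\Delta_{n+1})^m f=(\Delta_{n+1})^m\dibar f=(1-n)(\Delta_{n+1})^m f'_s=0,
\]
which is (ii). Applying $\partial$ to (ii) and using $\partial\dibar=\Delta_{n+1}$ then gives $(\Delta_{n+1})^{m+1}f=\partial\big(\dibar(\Delta_{n+1})^m f\big)=0$, which is (iii).
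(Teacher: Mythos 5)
The paper does not actually prove this theorem: it is quoted from \cite{Harmonicity} (Corollaries 3.3.3 and 3.4.4), so there is no in-paper argument to compare against, and your proposal has to stand on its own. Most of it does. The chain-rule evaluation of $\dibar f$ on $f=F_1+JF_2$, using $\sum_i e_i^2=-n$ and $\im(x)J=-\beta$, is correct and gives exactly your master formula, and $f'_s=F_2/\beta$ is right; comparing the identity at $x$ and at $x^c$ (both lie in $\OO$ because $\OO$ is circular) legitimately separates the two $J$-independent brackets, so (a) holds in both directions. Parts (ii) and (iii) then follow formally as you say, since $\Delta_{n+1}$ has real constant coefficients and $\partial\dibar=\Delta_{n+1}$. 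The intertwining computation $R\Lambda_N=\Lambda_{N+2}R$ and the vanishing factor in the ladder are also correct.

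The genuine gap is in (i), in the passage from the generating case $F(z)=e^{\lambda z}$ to a general holomorphic stem function. Differentiating in $\lambda$ recovers the monomials $z^k$, and termwise application handles stem functions given by power series centred at a real point; but a slice-regular function on a general circular domain (one not meeting the real axis, or not simply connected) is not locally of this form --- a Taylor expansion of $F$ at a non-real $z_0$ is not a stem function --- so ``all monomials hence all holomorphic $F$'' does not follow. Fortunately your own ladder closes the gap with no density argument at all: work directly with $u=F_2$, which is harmonic in $(\alpha,\beta)$ and vanishes on $\beta=0$. Setting $L_N=\partial_\alpha^2+\partial_\beta^2+\tfrac{N-1}{\beta}\partial_\beta$ and $T=\beta^{-1}\partial_\beta$, the same intertwining gives $L_{N+2}T=TL_N$, and a one-line computation gives $L_3(u/\beta)=\beta^{-1}(\partial_\alpha^2+\partial_\beta^2)u=0$. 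Hence $L_{2m}(u/\beta)=(2m-2)\,T(u/\beta)$ and, inductively, $L_{2m}^k(u/\beta)=(2m-2)(2m-4)\cdots(2m-2k)\,T^k(u/\beta)$; at $k=m$ the last factor is $0$, which is $(\Delta_{n+1})^m f'_s=0$ for every slice-regular $f$ on every circular domain. I would replace the exponential reduction by this direct computation; everything else can stay as written.
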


\begin{remark}\label{rem:ADBW}
In the case of even $n$, the fractional power $(\Delta_{n+1})^m$ of the Laplacian, with $m={\frac{n-1}2}$, can be defined by means of the Fourier transform. Recently, using the results of Qian  \cite{Qian1997}, Altavilla, De Bie and Wutzig \cite[Lemma~4.4]{AltavillaDeBieWutzig} generalized point (i) of part (b) of the previous statement to the case of even dimension $n$, proving that it still holds $(\Delta_{n+1})^{m}f'_s=0$ for every slice-regular function $f$ for which $(\Delta_{n+1})^{m}f$ is a well-defined differentiable function.
\end{remark}

\section{Almansi-type decomposition for  slice-regular functions}

We firstly consider the case of polynomials $p(x)=\sum_{k=0}^dx^ka_k$ with Clifford coefficients in $\Rn$, with $n$ odd. We start with a decomposition of Clifford powers $x^k$ and then extend it by right-linearity.

\begin{proposition}
If $n=2m+1$ is odd, then for every $k\ge0$ and for every $x\in\R^{n+1}\subseteq\R_n$ it holds
\[
x^k=\Zt^m_k(x)-x^c\cdot\Zt^m_{k-1}(x)=\Zt^m_k(x)-x^c\Zt^m_{k-1}(x)
\]
where  for $k\ge0$ the function $\Zt^m_k(x):=\left(x^{k+1}\right)'_s$ is the spherical derivative of the Clifford power $x^{k+1}$ on $\Rn$ and $\Zt^m_{-1}:=0$.
\end{proposition}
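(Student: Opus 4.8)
The plan is to start from the definition of the spherical derivative applied to the Clifford power. Recall that for any slice function $g$ on $\OO\setminus\R$ one has the representation $g(x)=\vs{g}(x)+\im(x)g'_s(x)$, where $\vs{g}(x)=\tfrac12(g(x)+g(x^c))$ and $g'_s(x)=\tfrac12\im(x)^{-1}(g(x)-g(x^c))$. Applying this to $g(x)=x^{k+1}$ gives
\begin{equation*}
x^{k+1}=\vs{(x^{k+1})}(x)+\im(x)\,\Zt^m_k(x),
\end{equation*}
since by definition $\Zt^m_k(x)=(x^{k+1})'_s$. The idea is then to relate the spherical value $\vs{(x^{k+1})}(x)$ and the product $\im(x)\Zt^m_k(x)$ to $x^k$ and to the lower-order zonal term $\Zt^m_{k-1}(x)$, so as to recover the claimed identity $x^k=\Zt^m_k(x)-x^c\Zt^m_{k-1}(x)$.

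First I would record the two basic algebraic facts that make everything work: since $x\in\R^{n+1}$ lies in the paravector space (hence in the quadratic cone), we have $x+x^c=t(x)=2\re(x)\in\R$ and $xx^c=x^cx=n(x)=|x|^2\in\R$, so $x$ satisfies the quadratic relation $x^2=t(x)x-n(x)$, i.e. $x^2=(x+x^c)x-xx^c$. Equivalently $x^2=-x^cx+(x+x^c)x$, but the cleanest form is $x^2=t(x)\,x-n(x)$. I would also note that $\im(x)=x-\re(x)=\tfrac12(x-x^c)$, so $\im(x)=x-\re(x)$ and $x^c=t(x)-x=2\re(x)-x$. The key recursion I expect to exploit is that multiplying the power $x^{k}$ by $x$ and using $x^c=t(x)-x$ lets one rewrite powers in terms of the real scalars $t(x),n(x)$ times lower powers, which is exactly the structure encoded by the spherical value and derivative.

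The main computational step is then to establish a recursion for $\Zt^m_k=(x^{k+1})'_s$. The cleanest route is to use that the spherical derivative is additive and satisfies a Leibniz-type rule with respect to the slice product, together with the fact that the identity function $x$ on the paravector space is slice-preserving (so that its slice product with another slice function is ordinary pointwise multiplication on $\R^{n+1}$). Writing $x^{k+1}=x\cdot x^k$ and computing $(x\cdot x^k)'_s$ via $(f\cdot g)'_s=\vs{f}g'_s+f'_s\,g(\,\cdot\,^c)$ (the spherical-derivative product rule), or more concretely by substituting $x^{k+1}=\vs{(x^{k+1})}+\im(x)\Zt^m_k$ into $x^{k+1}=x\cdot x^k$ and separating spherical value from spherical derivative, yields a two-term recursion linking $\Zt^m_k$, $\Zt^m_{k-1}$, and the scalars $t(x),n(x)$. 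The statement to be proved is equivalent, after rearranging $x^{k+1}=\vs{(x^{k+1})}+\im(x)\Zt^m_k$, to showing that $\vs{(x^{k+1})}-\im(x)x^c\Zt^m_{k-1}(x)$ collapses to $x^k+\im(x)\Zt^m_k(x)-x^k\cdots$; the cleanest verification is probably a direct induction on $k$.

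Consequently, the proof I favor is induction on $k$. The base cases $k=0$ (where $\Zt^m_0(x)=(x)'_s=1$ and $\Zt^m_{-1}=0$, giving $x^0=1=\Zt^m_0(x)$) and $k=1$ (where one checks $x=\Zt^m_1(x)-x^c\Zt^m_0(x)=(x^2)'_s-x^c$) are immediate once $\Zt^m_1(x)=(x^2)'_s$ is computed from $x^2=t(x)x-n(x)$, giving $(x^2)'_s=t(x)=x+x^c$ and hence $\Zt^m_1(x)-x^c=x+x^c-x^c=x$. For the inductive step I would multiply the identity for $x^k$ on the left by $x$, use $x=t(x)-x^c$ to convert the leading $x\cdot x^c$ and $x\cdot x$ terms into scalars times lower powers, and repeatedly apply the defining relation $(x^{j+1})'_s=\Zt^m_j$ together with the quadratic relation to match coefficients. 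The main obstacle I anticipate is bookkeeping the noncommutativity: although $x$ and $x^c$ commute with each other (both lie in $\C_J$ for the relevant $J$) and $t(x),n(x)$ are central real scalars, one must be careful that the coefficients $a_k$ sit on the right and that $\im(x)^{-1}$ appearing in the definition of $\Zt^m_k$ interacts correctly with $x^c=t(x)-x$; keeping all manipulations inside the commutative slice $\C_J$ containing $x$ should neutralize this difficulty and reduce the whole argument to an identity among commuting quantities, which is then a routine polynomial computation.
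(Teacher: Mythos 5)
Your induction works and is a legitimately different route from the paper's argument. The paper proves the identity in two lines without induction: it applies the Leibniz rule $(f\cdot g)'_s=f'_s\cdot\vs{g}+\vs{f}\cdot g'_s$ to $x^{k+1}=x\cdot x^k$ to get $(x^{k+1})'_s=\vs{(x^k)}+x_0(x^k)'_s$, and then substitutes this into the representation formula $x^k=\vs{(x^k)}+\im(x)(x^k)'_s$ to obtain $x^k=(x^{k+1})'_s-(x_0-\im(x))(x^k)'_s=(x^{k+1})'_s-x^c(x^k)'_s$ directly. Your approach instead runs on the minimal polynomial $x^2=t(x)x-n(x)$ of a paravector: the inductive step amounts to the three-term recursion $\Zt^m_{k+1}=t(x)\,\Zt^m_k-n(x)\,\Zt^m_{k-1}$, which follows from $x^{k+2}=t(x)x^{k+1}-n(x)x^k$ because the spherical derivative is additive and commutes with multiplication by the real, sphere-constant functions $t(x)$ and $n(x)$. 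You should state that recursion explicitly, since it is the whole content of the inductive step; once it is in hand, multiplying the inductive hypothesis by $x$ and using $xx^c=n(x)$, $x+x^c=t(x)$ closes the argument. What your route buys is independence from the Leibniz rule for the slice product (you only need linearity of $'_s$ over real circular functions); what the paper's route buys is brevity and an identity valid termwise with no recursion.

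One caveat: the product rule you quote in passing, $(f\cdot g)'_s=\vs{f}\,g'_s+f'_s\,g(\,\cdot\,^c)$, is not the correct Leibniz formula — the right-hand side is not even constant on the spheres $\SS_x$ in general, and for $g=x^k$ it differs from the correct expression $\vs{f}\,g'_s+f'_s\,\vs{g}$ by the term $-f'_s\,\im(x)\,g'_s$. Had you followed that branch you would have derived a wrong recursion. Since your favored proof is the induction via the quadratic relation, this does not invalidate the argument, but the erroneous formula should be deleted or replaced by the correct one.
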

\begin{proof}
Applying the Leibniz-type formula 
$(f\cdot g)'_s=f'_s\cdot\vs g+\vs f\cdot g'_s$
 satisfied by the spherical derivative (see \cite[\S5]{GhPe_AIM})  to the power $x^{k+1}$, we  get
\begin{equation}\label{eq:powers}
\sd{(x^{k+1})}=\sd{(x\cdot x^{k})}=\sd{(x)}\vs{(x^{k})}+\vs{(x)}\sd{(x^{k})}=\vs{(x^{k})}+x_0\sd{(x^{k})}.
\end{equation}
Here the slice product coincides with the pointwise product since the functions are slice-preserving.
Using \eqref{eq:sp_representation} and \eqref{eq:powers}, we can write
\[
x^{k}=\vs{(x^k)}+\IM(x)(x^k)'_s=\sd{(x^{k+1})}-x_0\sd{(x^k)}+\IM(x)\vs{(x^k)}=
\sd{(x^{k+1})}-x^c\sd{(x^k)}
\]
which is the formula to be proved.
\end{proof}

For every $k\ge0$, the polynomial $\Zt^m_k(x)$ is a homogeneous polynomial in the variables $x_0,\ldots,x_n$, of (total) degree $k$, with real coefficients (see \cite[\S3.5]{Harmonicity}). From point (b) of Theorem~\ref{th:harmonicity}, it follows that $\Zt^m_k$ is $m$-harmonic. Moreover, being a spherical derivative, it has an axial symmetry with respect to the real axis, i.e.\ $\Zt^m_k\circ T=\Zt^m_k$ for every orthogonal transformation $T$ of $\R^{n+1}$ that fixes the real points. Every polynomial   in the variables $x_0,\ldots x_n$ of the form $\sum_{k=0}^d \Zt^m_k(x) a_k$ has the same properties. It will be called a \emph{zonal $m$-harmonic polynomial with pole 1}.

\begin{example}
Let $n=5$ and $m=2$ and consider the (3-harmonic) powers $x^k$ of the Clifford variable in $\R^6\subseteq\R_5$. It holds, for every $k\ge1$,
\[
x^k=\Zt^2_k(x)-x^c\,\Zt^2_{k-1}(x)\text{\quad  $\forall x\in\R^6\subseteq\R_5$},
\]
with zonal biharmonics $\Zt^2_k(x)$ in $\R^6$. For example, $x^4=\Zt^2_4(x)-x^c\,\Zt^2_{3}(x)$ with
\begin{align*}
\Zt^2_3(x)&=4x_0(x_0^2-x_1^2-x_2^2-x_3^2-x_4^2-x_5^2)\quad\text{and}\\
\Zt^2_4(x)&=5x_0^4-10x_0^2\left(\textstyle\sum_{i=1}^5 x_i^2\right)+\left(\textstyle\sum_{i=1}^5 x_i^2\right)^2.
\end{align*}
It holds $\Delta_6\Zt^2_4=-40x_0^2+8\left(\textstyle\sum_{i=1}^5 x_i^2\right)$ and then $\Delta^2_6\Zt^2_4=0$, as expected. 
\end{example}

\begin{corollary}\label{cor:Almansi}
Let $f(x)=\sum_{k=0}^d x^ka_k$ be a polynomial function of degree $d\ge1$ with Clifford coefficients $a_k\in\Rn$ (with $n=2m+1$ odd). Then there exist two zonal $m$-harmonic polynomials $A$ and $B$ with pole 1, of degrees $d$ and $d-1$ respectively, such that
\[f(x)=A(x)-x^c B(x)\text{\quad for every }x\in\R^{n+1}.
\] 
\end{corollary}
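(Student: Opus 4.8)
The plan is to reduce the statement to the preceding Proposition by right-linearity. That Proposition already furnishes the desired decomposition on each monomial: for every $k\ge0$ one has $x^k=\Zt^m_k(x)-x^c\,\Zt^m_{k-1}(x)$ with $\Zt^m_{-1}=0$. Since $f(x)=\sum_{k=0}^d x^k a_k$ is assembled from these powers with coefficients on the right, substituting each identity and collecting the two resulting families of terms should immediately produce the two required summands.

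Concretely, I would write
\[
f(x)=\sum_{k=0}^d \bigl(\Zt^m_k(x)-x^c\,\Zt^m_{k-1}(x)\bigr)a_k
=\sum_{k=0}^d \Zt^m_k(x)a_k-x^c\sum_{k=0}^d \Zt^m_{k-1}(x)a_k,
\]
where pulling the common left factor $x^c$ out of the second sum is legitimate precisely because the coefficients $a_k$ sit on the right. This suggests setting $A(x):=\sum_{k=0}^d \Zt^m_k(x)a_k$ and $B(x):=\sum_{k=1}^d \Zt^m_{k-1}(x)a_k$, the term $k=0$ dropping out since $\Zt^m_{-1}=0$, so that $f(x)=A(x)-x^c B(x)$.

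It then remains to verify the claimed properties of $A$ and $B$. Each $\Zt^m_k$ is, by the discussion following the Proposition, a zonal $m$-harmonic polynomial with pole $1$ of degree exactly $k$; hence any right-linear combination of the form $\sum \Zt^m_k(x) a_k$ is again zonal $m$-harmonic with pole $1$, which covers both $A$ and $B$. For the degrees, the leading term of $A$ is $\Zt^m_d(x)a_d$ of degree $d$ and that of $B$ is $\Zt^m_{d-1}(x)a_d$ of degree $d-1$; since $f$ has degree $d$ we have $a_d\neq0$, so these leading terms do not vanish and the degrees are exactly $d$ and $d-1$. I expect no genuine obstacle here: the entire content is carried by the preceding Proposition, and the only points deserving a word of care are the right-sided factoring of $x^c$ and the stability of the zonal/harmonic structure under right-linear combinations, both of which follow directly from the facts already recorded.
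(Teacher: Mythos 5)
Your proposal is correct and follows exactly the paper's argument: the paper's proof likewise just sets $A(x):=\sum_{k=0}^d\Zt^m_k(x)a_k$ and $B(x):=\sum_{k=0}^d\Zt^m_{k-1}(x)a_k$ and invokes the preceding Proposition by right-linearity. Your additional remarks on the right-sided factoring of $x^c$ and on the exact degrees are sound and merely make explicit what the paper leaves implicit.
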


\begin{proof}
In view of the preceding proposition, it suffices to set $A(x):=\sum_{k=0}^d\Zt^m_k(x)a_k$ and $B(x):=\sum_{k=0}^d\Zt^m_{k-1}(x)a_k$. 
\end{proof}

The polynomial $\Zt^m_k(x)$ coincides, up to a real multiplicative constant $c^m_k$, with the unique zonal $m$-harmonic homogeneous polynomial on $\R^{n+1}$ of degree $k$ with pole $1$ (see  \cite[Chapter~5]{HFT} for the harmonic case ($m=1$) and \cite{Render} for the genuine polyharmonic case ($m>1$)). This means that there exists a real constant $c^m_k$ such that
\[
\Zt^m_k(x)=c^m_k\ \mathcal Z^m_k(x,1),
\]
where $\mathcal Z^m_k(x,y)$ is the \emph{reproducing kernel} of the $m$-harmonic homogeneous polynomials of degree $k$ on $\R^{n+1}$ with respect to the Fischer inner product or (with different constants) with respect to the $\mathcal L^2$-product on the unit sphere $\s^n$ of $\R^{n+1}$.
The constants $c^m_k$ can be determined from the property $\Zt^m_k(1)=k+1$. The value $\Zt^m_k(1)$ can be computed using the fact that the spherical derivative $f'_s$ of a slice-regular function $f=\I(F)$ extends to the real points with the values of the slice derivative $\dd {f}{x}=\I\big(\dd {F}{z}\big)$ of $f$ \cite{GhPe_AIM}. Since 
\[\dd{x^{j}}{x}=j x^{j-1}\text{\quad for every }j\ge1,\]
 it holds  $\Zt^m_k(1)=(k+1)(x^k)_{|x=1}=k+1$.

In particular, when $m=1$, $\mathcal Z^1_k(x,y)$ is the (solid) zonal harmonic of degree $k$ and pole $y$ in $\R^4$ (denoted by  $\mathcal Z_k(x,y)$ in \cite[Chapter~5]{HFT}). In this case the constants are $c^1_k=1/(k+1)$ (see \cite[Proposition~3.5.1]{Harmonicity}), i.e.
\[
\Zt^1_k(x)=\frac1{k+1} \mathcal Z_k(x,1).
\]

Now we extend Corollary~\ref{cor:Almansi} in two directions: we prove the Almansi-type decomposition for every slice-regular function of a Clifford variable and we remove the requirement on the dimension $n$ to be odd. 

In the following statement we will consider slice functions that are $m$-harmonic and axially symmetric with respect to the real axis. In accordance with the terms used for polynomials, such functions will be called \emph{zonal $m$-harmonic functions with pole 1}. For brevity, we use the notation $\partial_\alpha$ and $\partial_\beta$ to denote the partial derivatives with respect to $\alpha$ and $\beta$.

\begin{theorem}\label{th:Almansi}
Let $n\ge2$ be any integer and let $m={\frac{n-1}2}$. Let $f:\OO\subseteq\R^{n+1}\to\Rn$  be slice-regular. 
If $n$ is even, we assume that the fractional Laplacian $(\Delta_{n+1})^{m}f$ is a well-defined differentiable function.
Then there exist two unique $\Rn$-valued zonal $m$-harmonic functions $A$, $B$ with pole $1$ on $\OO$, such that 
\[
f(x)=A(x)- x^c B(x)\text{\quad  $\forall x\in\OO$.}
\]
Conversely, if $A$ and $B$ are  $\Rn$-valued functions of class $C^1$ 
on $\OO$,  axially symmetric with respect to the real axis, 
then $g(x):=A(x)-x^c B(x)$ is a slice function on $\OO$. The function $g$ is slice-regular if and only if $A$ and $B$ satisfy the system of equations
\begin{equation}
\begin{cases}\label{system}
\partial_\alpha A-\alpha\,\partial_\alpha B-\beta\,\partial_\beta B=2B\\
\partial_\beta A-\alpha\,\partial_\beta B+\beta\,\partial_\alpha B=0 
\end{cases}
\end{equation}
where $\alpha=x_0$ and $\beta=|\im(x)|=\sqrt{x_1^2+\cdots+x_n^2}$.
\end{theorem}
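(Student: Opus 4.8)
The plan is to read off $A$ and $B$ directly from the representation formula \eqref{eq:sp_representation} and then to recognise \emph{both} of them as spherical derivatives of slice-regular functions, so that their $m$-harmonicity is an immediate consequence of Theorem~\ref{th:harmonicity}(b)(i). Concretely, I would set $B:=f'_s$ and $A:=\vs f+\alpha f'_s$. Since $x^c=\alpha-\im(x)$, formula \eqref{eq:sp_representation} gives, keeping $\im(x)$ always on the left of $f'_s$ so that no ordering issue arises, $A-x^cB=\vs f+\alpha f'_s-(\alpha-\im(x))f'_s=\vs f+\im(x)f'_s=f$ on $\OO\setminus\R$; by continuity (and using that $f'_s$ extends to the real points as the slice derivative) the identity holds on all of $\OO$. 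The crux is the harmonicity of $A$: applying the Leibniz rule $(f\cdot g)'_s=f'_s\vs g+\vs f\,g'_s$ to the slice-preserving factor $x$, for which $\vs x=\alpha$ and $x'_s=1$, yields $(x\cdot f)'_s=\vs f+\alpha f'_s=A$. Because the slice product of slice-regular functions is slice-regular and $x$ is slice-preserving, $x\cdot f$ is the slice-regular function $x\mapsto xf(x)$, so $A=(xf)'_s$ is the spherical derivative of a slice-regular function. Theorem~\ref{th:harmonicity}(b)(i) then makes $A=(xf)'_s$ and $B=f'_s$ both $m$-harmonic; being spherical derivatives they are axially symmetric, hence zonal $m$-harmonic with pole $1$.

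For uniqueness I would subtract two such decompositions: if $\tilde A=x^c\tilde B$ with $\tilde A,\tilde B$ axially symmetric, then at a point $x=\alpha+J\beta$ with $\beta\ne0$ one has $\tilde A=(\alpha-\beta J)\tilde B$, while at the point $\alpha-J\beta$ of the same sphere (where $\tilde A,\tilde B$ take the same values) one has $\tilde A=(\alpha+\beta J)\tilde B$. Subtracting gives $2\beta J\tilde B=0$, whence $\tilde B=0$ and then $\tilde A=0$. Density of $\OO\setminus\R$ in $\OO$ and continuity extend this to all of $\OO$.

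For the converse I would compute $\vs g$ and $g'_s$ by hand. Writing $x=\alpha+J\beta$ and using that $A,B$ depend only on $(\alpha,\beta)$, one finds $g(x)=A-\alpha B+\beta JB$ and $g(x^c)=A-\alpha B-\beta JB$, so that $\vs g=A-\alpha B$ and $g'_s=B$. Thus $g$ is the slice function $\I(G)$ induced by the stem $G=G_1+\ui G_2$ with $G_1=A-\alpha B$ and $G_2=\beta B$, the axial symmetry of $A,B$ making $G_1$ even and $G_2$ odd in $\beta$, as a stem function requires. Slice-regularity of $g$ is then equivalent to holomorphy of $G$, i.e.\ to the Cauchy-Riemann equations $\partial_\alpha G_1=\partial_\beta G_2$ and $\partial_\beta G_1=-\partial_\alpha G_2$. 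Substituting $G_1=A-\alpha B$, $G_2=\beta B$ and cancelling the common term $B$ in the first equation turns these two equations into exactly the system \eqref{system}, which finishes the argument.

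The main obstacle is the $m$-harmonicity of $A$ in the existence part, and specifically its extension to even $n$, where $(\Delta_{n+1})^m$ is a fractional power and one must invoke the Altavilla--De Bie--Wutzig result recalled in Remark~\ref{rem:ADBW}. To apply it to $A=(xf)'_s$ I would have to verify that $xf$ inherits the standing hypothesis, namely that $(\Delta_{n+1})^m(xf)$ is again a well-defined differentiable function; this should follow from the product identity $\Delta^m(x_if)=x_i\,\Delta^mf+2m\,\partial_{x_i}\Delta^{m-1}f$ together with the assumed regularity of $f$, but it is the single point requiring genuine care beyond the odd-dimensional case, where Theorem~\ref{th:harmonicity}(b)(i) applies verbatim to both $f$ and $xf$.
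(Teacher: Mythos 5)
Your proposal is correct and follows essentially the same route as the paper: the decomposition $A=(xf)'_s$, $B=f'_s$ obtained from the Leibniz rule and the representation formula, uniqueness via axial symmetry of $A$ and $B$ on each sphere $\SS_x$, and the converse by exhibiting the stem $G_1=A-\alpha B$, $G_2=\beta B$ and translating holomorphy into the system \eqref{system}. Your closing observation about verifying that $(\Delta_{n+1})^{m}(xf)$ is well defined in the even-dimensional case is a legitimate point of care that the paper's proof passes over silently when invoking Remark~\ref{rem:ADBW} for $A=(xf)'_s$.
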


\begin{proof}
Assume that $f$ is slice-regular on $\OO$. Then also $x\cdot f=xf$ is slice-regular and it holds
\begin{equation}\label{eq:product}
\sd{(xf)}=\sd{(x\cdot f)}=\sd{(x)}\vs{f}+\vs{(x)}\sd{f}=\vs{f}+x_0\sd{f}.
\end{equation}
From \eqref{eq:sp_representation} and \eqref{eq:product} we get
\[f(x)=\vs{f}(x)+\im(x)f'_s(x)=\left(\vs{f}(x)+x_0 f'_s(x)\right)- x^c\, f'_s(x)=\sd{(xf)}- x^c\, f'_s(x).
\]
Define $A(x):=\vs{f}(x)+x_0 f'_s(x)=(xf)'_s$ and $B(x):=f'_s(x)$.  Point (b) of Theorem~\ref{th:harmonicity}, together with Remark~\ref{rem:ADBW}, implies that  $A$ and $B$, being spherical derivatives of slice regular functions, are zonal $m$-harmonic slice functions on $\OO$ with pole 1.

To prove uniqueness, we observe that if $f(x)=A(x)- x^c B(x)$ with $A$ and $B$ axially symmetric functions w.r.t.\ the real axis, then for every $x\in\OO\setminus\R$ it holds
\[
\sd f(x)=(2\IM(x))^{-1}\left(A(x)-x^c B(x)-A(x^c)+xB(x^c)\right).
\]
Since $A(x^c)=A(x)$ and $B(x^c)=B(x)$, we get that $\sd f(x)=(2\IM(x))^{-1}(x-x^c)B(x)=B(x)$. From this we deduce also $A(x)=f(x)+x^c\sd f(x)$ and therefore $A$ and $B$ are uniquely determined by $f$.

Conversely, assume that $A$ and $B$ are axially symmetric w.r.t.\ the real axis on $\OO=\OO_D$. Given $x=\alpha+J\beta\in\OO$ and $z=\alpha+\ui\beta\in D$, we set $G_1(z):=A(x)-x_0 B(x)=A(x)-\alpha B(x)$ and $G_2(z):=\beta B(x)$. Thanks to the symmetry properties of $A$ and $B$, $G_1$ and $G_2$ are well-defined functions from $D$ into $\Rn$, and the function $G:=G_1+\ui G_2$ is a stem function of class $C^1$ on $D$, which induces the slice function $g=A-x^cB$:
\[\I(G)(x)=A(x)-x_0 B(x)+J\beta B(x)=A(x)-(x_0-\IM(x))B(x)=A(x)-x^cB(x).
\]
To conclude the proof it is sufficient to see that the Cauchy-Riemann equations $\partial_\alpha G_1=\partial_\beta G_2$, $\partial_\beta G_1=-\partial_\alpha G_2$
for $G$ are equivalent to the system \eqref{system} for $A$ and $B$. 
\end{proof}

Note that the correspondence $f\mapsto (A,B)$ given by Theorem~\ref{th:Almansi} is right $\Rn$-linear. Moreover, the pairs $(A,B)$ formed by real-valued functions $A$ and $B$ correspond to the subclass of slice-preserving functions $f$.

\begin{example}
For $n=3$, $m=1$, the functions $A,B$ associated to $f$ are $\R_3$-valued harmonic function of four real variables.
For example, let 
\[f(x)=\exp(x)=e^{x_0}\cos\sqrt{x_1^2+x_2^2+x_3^2}+\frac{\IM(x)}{|\IM(x)|}e^{x_0}\sin\sqrt{x_1^2+x_2^2+x_3^2}\]
 be the slice-preserving regular function induced on $\Q_{\R_3}$ by the complex exponential. Then $f(x)=A(x)-x^c B(x)$ with harmonic components $A,B$ on $\R^4$ given by 
\[
\begin{cases}
A(x)=e^{x_0}\big(\cos\sqrt{x_1^2+x_2^2+x_3^2}+x_0\tfrac{\sin\sqrt{x_1^2+x_2^2+x_3^2}}{\sqrt{x_1^2+x_2^2+x_3^2}}\big),\\
B(x)=e^{x_0}\tfrac{\sin\sqrt{x_1^2+x_2^2+x_3^2}}{\sqrt{x_1^2+x_2^2+x_3^2}}.
\end{cases}
\]
\end{example}

\begin{example}
For $n=2$, $m=1/2$, the function $g(x)=x^{-1}=x^c|x|^{-2}$ is slice-regular on $\R^3\setminus\{0\}\subset\R_2$.
The function $\Delta_3^{1/2}g$ is a multiple of the monogenic Cauchy kernel $E(x)=x^c|x|^{-3}$ on $\R_2$.
It holds
\[\sd g=-\dibar g=-|x|^{-2}\quad\text{and}\quad \sd{(xg)}=0.
\]
Therefore the decomposition of Theorem~\ref{th:Almansi} is given by $A\equiv0$, $B(x)=-|x|^{-2}$. These functions are in the kernel of the fractional Laplacian $\Delta_3^{1/2}$ on $\R^3\setminus\{0\}$.
\end{example}

\begin{remark}
The $m$-harmonic functions $A$ and $B$ in the statement of Theorem \ref{th:Almansi} can also be computed from $f$ by differentiation using the Cauchy-Riemann operator on $\Rn$. Since $A(x)=(xf)'_s$ and $B(x)=f'_s(x)$, from point (a) of Theorem~\ref{th:harmonicity} we get
\[
A=\tfrac1{1-n}\dibar (xf)\quad\text{and}\quad  B=\tfrac1{1-n}\dibar f.
\]
\end{remark}

Combining Theorem~\ref{th:Almansi} with the classical Almansi's Theorem, we obtain another decomposition in terms of biharmonic functions in the kernel of the operator $\dibar\Delta_{n+1}$. Here we assume $m>1$ because the case $m=1$ is already contained in Theorem~\ref{th:harmonicity}.

\begin{corollary}
Let $n=2m+1>3$ be odd. Let $\OO\subseteq\R^{n+1}$ be a star-like domain with centre 0. Let $f:\OO\subseteq\R^{n+1}\to\Rn$  be slice-regular. Then there exist $\Rn$-valued zonal biharmonic functions $g_0,\ldots,g_{m-1}$  on $\OO$, with pole $1$, such that 
\[
f(x)=g_0(x)+|x|^2g_1(x)+\cdots+|x|^{2m-2}g_{m-1}(x)\text{\quad  $\forall x\in\OO$.}
\]
The functions $g_0,\ldots,g_{m-1}$ belong to the kernel of the operator $\dibar\Delta_{n+1}$.
\end{corollary}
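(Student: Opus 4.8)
The plan is to combine the $f = A - x^c B$ decomposition of Theorem~\ref{th:Almansi} with the classical Almansi Theorem~\ref{th:ClassicalAlmansi}, and then to read off biharmonicity and the kernel condition from a single product rule for the Laplacian.

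First I would apply Theorem~\ref{th:Almansi} to write $f = A - x^c B$ with $A = (xf)'_s$ and $B = f'_s$. Since $f$ and $xf$ are both slice-regular and $n = 2m+1$ is odd, Theorem~\ref{th:harmonicity}(b)(i) shows $A$ and $B$ are $m$-harmonic on $\OO$; being spherical derivatives they are constant on each sphere $\SS_x$, hence functions of $x_0$ and $\beta = |\IM(x)|$ alone. Because $\OO$ is star-like with centre $0$ and $\Delta_{n+1}$ acts componentwise, I would apply Theorem~\ref{th:ClassicalAlmansi} (with $p = m$) to each real component of $A$ and of $B$ and reassemble, obtaining $\Rn$-valued harmonic functions $a_0,\dots,a_{m-1}$ and $b_0,\dots,b_{m-1}$ with
\[
A = \sum_{i=0}^{m-1}|x|^{2i}a_i, \qquad B = \sum_{i=0}^{m-1}|x|^{2i}b_i .
\]
As the real radial factors $|x|^{2i}$ commute with $x^c$, substituting into $f = A - x^c B$ gives $f = \sum_{i=0}^{m-1}|x|^{2i}g_i$ with $g_i := a_i - x^c b_i$.

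Next I would verify the two differential properties of each $g_i$. The computational heart is the product rule $\Delta_{n+1}(x^c v) = 2\,\partial v + x^c\,\Delta_{n+1}v$ for any $C^2$ function $v$: it holds because $x^c$ is affine, so $\Delta_{n+1}x^c = 0$, while $\sum_{k=0}^{n}(\partial_{x_k}x^c)\,\partial_{x_k}v = \partial v$ is precisely the conjugated Cauchy-Riemann operator. Applying this with $v = b_i$ harmonic gives $\Delta_{n+1}g_i = -2\,\partial b_i$, whence $\Delta_{n+1}^2 g_i = -2\,\partial(\Delta_{n+1}b_i) = 0$ and, using $\dibar\partial = \Delta_{n+1}$, also $\dibar\Delta_{n+1}g_i = -2\,\dibar\partial b_i = -2\,\Delta_{n+1}b_i = 0$. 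Thus each $g_i$ is biharmonic and lies in the kernel of $\dibar\Delta_{n+1}$.

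Finally I would check that the $g_i$ are zonal with pole $1$. By the uniqueness in Theorem~\ref{th:ClassicalAlmansi}, the invariance $A\circ T = A$ under every orthogonal $T$ fixing the real axis forces $a_i\circ T = a_i$ (and likewise $b_i\circ T = b_i$), so the Almansi pieces inherit axial symmetry, i.e.\ depend only on $x_0$ and $\beta$. The converse part of Theorem~\ref{th:Almansi} then shows each $g_i = a_i - x^c b_i$ is a slice function, and, being built from axially symmetric harmonic data, it is a zonal function with pole $1$ in the same sense as $A$ and $B$. I expect the main obstacle to be exactly this bookkeeping: one must take ``zonal with pole $1$'' to refer to the axial symmetry of the underlying data $(a_i,b_i)$ rather than to $g_i$ being constant on the spheres $\SS_x$ (it is not, just as $f$ is not), and one must ensure this symmetry is genuinely transmitted through the unique Almansi pieces.
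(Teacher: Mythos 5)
Your proposal is correct and follows essentially the same route as the paper: decompose $f=A-x^cB$ via Theorem~\ref{th:Almansi}, apply the classical Almansi Theorem~\ref{th:ClassicalAlmansi} componentwise to the $m$-harmonic functions $A$ and $B$, recover the axial symmetry of the pieces from the uniqueness of the classical decomposition, and use the identity $\Delta_{n+1}(x^c v)=2\,\partial v+x^c\Delta_{n+1}v$ to get $\dibar\Delta_{n+1}g_i=-2\Delta_{n+1}b_i=0$. Your explicit justification of that product rule and your remark on how ``zonal with pole $1$'' should be read for the $g_i$ are sensible elaborations of points the paper leaves implicit, but they do not change the argument.
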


\begin{proof}
Let $f=A-x^cB$ be the decomposition given in Theorem~\ref{th:Almansi}. Applying Theorem~\ref{th:ClassicalAlmansi} to the $2^n$ real components of $A$ and $B$, which are $m$-harmonic on $\OO$, we get $\Rn$-valued harmonic functions $u_0,\ldots,u_{m-1}$ and $v_0,\ldots,v_{m-1}$ such that 
\[
A(x)=\sum_{k=0}^{m-1}|x|^{2k}u_k(x)\quad\text{and}\quad B(x)=\sum_{k=0}^{m-1}|x|^{2k}v_k(x).
\]
Therefore
\[
f(x)=A(x)-x^cB(x)=\sum_{k=0}^{m-1}|x|^{2k}(u_k(x)-x^c v_k(x))=\sum_{k=0}^{m-1}|x|^{2k}g_k(x).
\]
The functions $g_k=u_k-x^c v_k$ have the same symmetry properties as the functions $A$ and $B$. This follows from the uniqueness of the Almansi decomposition. Given any orthogonal transformation $T$ of $\R^{n+1}$ that fixes the real points, also $A\circ T$ is $m$-harmonic and $u_k\circ T$ is harmonic. Since
\[
A(T(x))=\sum_{k=0}^{m-1}|x|^{2k}u_k(T(x))=A(x),
\] 
it must be $u_k(T(x))=u_k(x)$ for every $k$. The same holds for $B$. To prove the last statement of the thesis, we observe that for every $\Rn$-valued function $u$, it holds $\Delta_{n+1}(x^c u)=2\partial u+x^c\Delta_{n+1}u$. Therefore 
$\Delta_{n+1}g_k=\Delta_{n+1}(-x^c v_k)=-2\partial v_k$ and then $\dibar\Delta_{n+1}g_k=-2\dibar\partial v_k=-2\Delta_{n+1} v_k=0$. In particular, the functions $g_k$ are biharmonic.
\end{proof}

\section{Acknowledgements}
This work was supported by GNSAGA of INdAM, and by the grants ``Progetto di Ricerca INdAM, Teoria delle funzioni ipercomplesse e applicazioni'', and PRIN ``Real and Complex Manifolds: Topology, Geometry and holomorphic dynamics'' of the Italian Ministry of Education.


\begin{thebibliography}{10}

\bibitem{Almansi}
E.~Almansi.
\newblock {Sull'integrazione dell'equazione differenziale
  \(\varDelta^{2n}=0\)}.
\newblock {\em {Annali di Mat. (3)}}, 2:1--51, 1899.

\bibitem{AltavillaDeBieWutzig}
A.~Altavilla, H.~De~Bie, and M.~Wutzig.
\newblock Implementing zonal harmonics with the {F}ueter principle.
\newblock 2019.
\newblock arXiv:1903.08914.

\bibitem{Aronszajn}
N.~Aronszajn, T.~M. Creese, and L.~J. Lipkin.
\newblock {\em Polyharmonic functions}.
\newblock Oxford Mathematical Monographs. The Clarendon Press, Oxford
  University Press, New York, 1983.

\bibitem{HFT}
S.~Axler, P.~Bourdon, and W.~Ramey.
\newblock {\em Harmonic function theory}, volume 137 of {\em Graduate Texts in
  Mathematics}.
\newblock Springer-Verlag, New York, second edition, 2001.

\bibitem{BDS}
F.~Brackx, R.~Delanghe, and F.~Sommen.
\newblock {\em Clifford analysis}, volume~76 of {\em Research Notes in
  Mathematics}.
\newblock Pitman (Advanced Publishing Program), Boston, MA, 1982.

\bibitem{CSSFueter2010}
F.~Colombo, I.~Sabadini, and F.~Sommen.
\newblock The {F}ueter mapping theorem in integral form and the {$\mathcal
  F$}-functional calculus.
\newblock {\em Math. Methods Appl. Sci.}, 33(17):2050--2066, 2010.

\bibitem{CoSaSt2009Israel}
F.~Colombo, I.~Sabadini, and D.~C. Struppa.
\newblock Slice monogenic functions.
\newblock {\em Israel J. Math.}, 171:385--403, 2009.

\bibitem{libroverde}
F.~Colombo, I.~Sabadini, and D.~C. Struppa.
\newblock {\em Noncommutative functional calculus}, volume 289 of {\em Progress
  in Mathematics}.
\newblock Birkh\"auser/Springer Basel AG, Basel, 2011.
\newblock Theory and applications of slice hyperholomorphic functions.

\bibitem{DSS}
R.~Delanghe, F.~Sommen, and V.~Sou\v{c}ek.
\newblock {\em Clifford algebra and spinor-valued functions}, volume~53 of {\em
  Mathematics and its Applications}.
\newblock Kluwer Academic Publishers Group, Dordrecht, 1992.

\bibitem{Fueter1934}
R.~Fueter.
\newblock Die {F}unktionentheorie der {D}ifferentialgleichungen {$\Delta u=0$}
  und {$\Delta\Delta u=0$} mit vier reellen {V}ariablen.
\newblock {\em Comment. Math. Helv.}, 7(1):307--330, 1934.

\bibitem{GeStoSt2013}
G.~Gentili, C.~Stoppato, and D.~C. Struppa.
\newblock {\em Regular Functions of a Quaternionic Variable}.
\newblock Springer Monographs in Mathematics. Springer, 2013.

\bibitem{GeSt2007Adv}
G.~Gentili and D.~C. Struppa.
\newblock A new theory of regular functions of a quaternionic variable.
\newblock {\em Adv. Math.}, 216(1):279--301, 2007.

\bibitem{GhPe_AIM}
R.~Ghiloni and A.~Perotti.
\newblock Slice regular functions on real alternative algebras.
\newblock {\em Adv. Math.}, 226(2):1662--1691, 2011.

\bibitem{AlgebraSliceFunctions}
R.~Ghiloni, A.~Perotti, and C.~Stoppato.
\newblock The algebra of slice functions.
\newblock {\em Trans. Amer. Math. Soc.}, 369(7):4725--4762, 2017.

\bibitem{DivisionAlgebras}
R.~Ghiloni, A.~Perotti, and C.~Stoppato.
\newblock {D}ivision algebras of slice functions.
\newblock {\em Proceedings A of the Royal Society of Edinburgh: Section A
  Mathematics}, 2019.
\newblock DOI:10.1017/prm.2019.13.

\bibitem{GHS}
K.~G{\"u}rlebeck, K.~Habetha, and W.~Spr{\"o}{\ss}ig.
\newblock {\em Holomorphic functions in the plane and {$n$}-dimensional space}.
\newblock Birkh\"auser Verlag, Basel, 2008.

\bibitem{MalonekRen}
H.~R. Malonek and G.~Ren.
\newblock Almansi-type theorems in {C}lifford analysis.
\newblock {\em Math. Methods Appl. Sci.}, 25(16-18):1541--1552, 2002.

\bibitem{Harmonicity}
A.~Perotti.
\newblock {S}lice regularity and harmonicity on {C}lifford algebras.
\newblock In {\em {T}opics in {C}lifford {A}nalysis -- {S}pecial {V}olume in
  {H}onor of {W}olfgang {S}pr\"o\ss ig}, Trends Math. Springer, Basel, 2019.
\newblock DOI: 10.1007/978-3-030-23854-4.

\bibitem{Qian1997}
T.~Qian.
\newblock Generalization of {F}ueter's result to {${\bf R}\sp {n+1}$}.
\newblock {\em Atti Accad. Naz. Lincei Cl. Sci. Fis. Mat. Natur. Rend. Lincei
  (9) Mat. Appl.}, 8(2):111--117, 1997.

\bibitem{Render}
H.~Render.
\newblock Reproducing kernels for polyharmonic polynomials.
\newblock {\em Arch. Math. (Basel)}, 91(2):136--144, 2008.

\bibitem{Sce}
M.~Sce.
\newblock Osservazioni sulle serie di potenze nei moduli quadratici.
\newblock {\em Atti Accad. Naz. Lincei. Rend. Cl. Sci. Fis. Mat. Nat. (8)},
  23:220--225, 1957.

\bibitem{Sommen2000}
F.~Sommen.
\newblock On a generalization of {F}ueter's theorem.
\newblock {\em Z. Anal. Anwendungen}, 19(4):899--902, 2000.

\end{thebibliography}

\end{document}